\begin{document}
\newcommand{\bea}{\begin{eqnarray}}
\newcommand{\ena}{\end{eqnarray}}
\newcommand{\beas}{\begin{eqnarray*}}
\newcommand{\enas}{\end{eqnarray*}}
\newcommand{\beq}{\begin{equation}}
\newcommand{\enq}{\end{equation}}
\def\qed{\hfill \mbox{\rule{0.5em}{0.5em}}}
\newcommand{\bbox}{\hfill $\Box$}
\newcommand{\ignore}[1]{}
\newcommand{\ignorex}[1]{#1}
\newcommand{\wtilde}[1]{\widetilde{#1}}
\newcommand{\mq}[1]{\mbox{#1}\quad}
\newcommand{\qmq}[1]{\quad\mbox{#1}\quad}
\newcommand{\qm}[1]{\quad\mbox{#1}}
\newcommand{\nn}{\nonumber}
\newcommand{\Bvert}{\left\vert\vphantom{\frac{1}{1}}\right.}
\newcommand{\To}{\rightarrow}
\newcommand{\supp}{\mbox{supp}}

\newtheorem{theorem}{Theorem}[section]
\newtheorem{corollary}{Corollary}[section]
\newtheorem{conjecture}{Conjecture}[section]
\newtheorem{proposition}{Proposition}[section]
\newtheorem{lemma}{Lemma}[section]
\newtheorem{definition}{Definition}[section]
\newtheorem{example}{Example}[section]
\newtheorem{remark}{Remark}[section]
\newtheorem{case}{Case}[section]
\newtheorem{condition}{Condition}[section]
\newcommand{\pf}{\noindent {\it Proof:} }
\newcommand{\proof}{\noindent {\it Proof:} }

\title{{\bf Concentration inequalities via zero bias couplings}}
\author{Larry Goldstein\thanks{work partially supported by NSA
grant H98230-11-1-0162.}  and \"{U}mit I\c{s}lak\\University of
Southern California} \footnotetext{AMS 2000 subject classifications:
Primary 60E15\ignore{inequalities stochastic orderings},
60C05\ignore{Combinatorial probability}, 62G10\ignore{Hypothesis
testing}.} \footnotetext{Key words and phrases: Tail probabilities,
zero bias coupling, Stein's method}

\maketitle \vspace{0.25in}
\begin{abstract}
The tails of the distribution of a mean zero, variance $\sigma^2$
random variable $Y$ satisfy concentration of measure inequalities of
the form $\mathbb{P}(Y \ge t) \le \exp(-B(t))$ for
\beas
B(t)=\frac{t^2}{2( \sigma^2 + ct)} \qmq{for
$t \ge 0$, and}
B(t)=\frac{t}{c}\left( \log t - \log \log t  - \frac{\sigma^2}{c}\right)
\quad \mbox{for $t>e$}
\enas
whenever there exists a zero biased coupling of
$Y$ bounded by $c$, under suitable
conditions on the existence of the moment generating function of
$Y$. These inequalities apply in cases where $Y$ is not a function of independent variables, such as
for the Hoeffding statistic $Y=\sum_{i=1}^n a_{i\pi(i)}$
where $A=(a_{ij})_{1 \le i,j \le n} \in \mathbb{R}^{n \times n}$ and
the permutation $\pi$ has the uniform distribution over the
symmetric group, and when its distribution is constant on cycle
type.
\end{abstract}

\section{Introduction}
Since the seminal work of \cite{tal}, the concentration of measure
phenomenon has attracted a great deal of attention of many
researchers working in very diverse fields, see the extensive
treatments of \cite{ledoux} and the recent text of \cite{blm}.
The work of \cite{chatterjee} uncovered
connections between concentration phenomenon and Stein's method, which produces
non-asymptotic error bounds for distributional approximation, see Stein (1972,1986), and also
\cite{goldsteinbook} and \cite{nathan} for overviews. Though
the application of the majority of concentration results requires the
quantity of interest to be a function of independent random
variables, \cite{chatterjee} demonstrated that tail bounds for functions of
dependent random variables, including Hoeffding's statistic and the
net magnetization in the Curie-Weiss model, can be derived using Stein's
exchangeable pair coupling. Use of the size bias
coupling, another important technique from Stein's method, was
shown in Ghosh and Goldstein (2011ab) to produce concentration bounds for the
number of relatively ordered subsequences of a random permutation,
sliding window statistics, the number of local maxima of a random
function on a graph, degrees of random graphs, multinomial
occupation models and coverage problems in stochastic geometry.

Here we focus on the zero bias coupling, also borrowed from Stein's
method and first introduced in \cite{goldsteinreinertzerobias}, and
show how it too may be used to yield concentration bounds. Recall from
\cite{goldsteinreinertzerobias} that for any mean zero random
variable $Y$ with positive, finite variance $\sigma^2$, there exists
a distribution for a random variable $Y^*$ satisfying
\begin{equation}\label{zerobiasdefn}
    \mathbb{E}[Y f(Y)] = \sigma^2 \mathbb{E}[f'(Y^*)]
\end{equation}
for all absolutely continuous functions $f$ for which the
expectation of either side exists; the variable $Y^*$ is said to
have the $Y$-zero biased distribution. A restatement of a result of
\cite{Stein} shows that a mean zero random variable $Y$ is normal if
and only if $Y=_d Y^*.$ Thus, if $Y$ and $Y^*$ can be coupled
closely it is natural to expect that the behavior of $Y$, including
the decay of its tail probabilities, may have behavior similar to that of the
normal. Our main results in the next section justify this heuristic.
For the use of zero bias couplings to produce bounds in normal
approximations see, for instance,  \cite{goldsteinbook} and the references
therein.

Applications of our results will be to the \cite{hoe51} statistic
given by \beas Y=\sum_{i=1}^n a_{i\pi(i)}, \enas depending on an
array $(a_{ij})_{1 \le i,j \le n}$ of real numbers and a random
permutation $\pi$. The quantity $Y$ arises in many applications,
permutation testing foremost among them, see \cite{ww} for a seminal
reference. Our results provide concentration bounds for $Y$ when the
distribution of $\pi$ is uniformly distributed over the symmetric
group, and when its distribution is constant on the cycle type of
$\pi$; for the latter, see \cite{goldsteinrinott} for a statistical
application where $\pi$ is chosen uniformly from the class of fixed
point free involutions.

We present our main result, Theorem \ref{maintheorem}, in Section
\ref{mainresultssection}, applications in Section
\ref{hoeffding} and the proof of Theorem \ref{maintheorem} in
Section \ref{proofs}.

\section{Main Result}\label{mainresultssection}

\begin{theorem}\label{maintheorem}
Let $Y$ be a mean zero random variable with variance $\sigma^2 \in
(0, \infty)$ and moment generating function $m(s)=\mathbb{E}[e^{s
Y}]$, and let $Y^*$ have the $Y$-zero bias distribution and be
defined on the same space as $Y$.

(a). If $Y^*-Y \le c$ for some $c >0$ and $m(s)$ exists for all $s
\in [0,1/c)$, then for all $t \ge 0$
\begin{equation}\label{tailsseparate.2}
\mathbb{P}(Y \geq t) \leq \exp \left(- \frac{t^2}{2(\sigma^2 + ct)}
\right).
\end{equation}
The same upper bound holds for $\mathbb{P}(Y \leq -t)$ if $Y-Y^* \le
c$ when $m(s)$ exists for all $s  \in (-1/c,0]$.
If $|Y^*-Y| \leq c$ for some $c > 0$ and $m(s)$ exists for all $s
\in [0,2/c)$ then for all $t \ge 0$
\begin{equation}\label{tailsseparate}
\mathbb{P}(Y \geq t)\leq \exp \left(- \frac{t^2}{10\sigma^2/3 + ct}
\right),
\end{equation}
with the same upper bound holding for $\mathbb{P}(Y \leq -t)$ if
$m(s)$ exists in $(-2/c,0]$.

(b). If $Y^*-Y \le c$ for some constant $c>0$ and $m(s)$ exists at
$\theta=(\log t - \log \log t)/c$ then for $t > e$
\bea \label{tlogtbound} \mathbb{P}(Y \ge t) \le \exp\left(
-\frac{t}{c}\left( \log t - \log \log t  -
\frac{\sigma^2}{c}\right)\right) \le  \exp\left(
-\frac{t}{2c}\left( \log t -
\frac{2\sigma^2}{c}\right)\right).
\ena

If $Y-Y^* \le c$ then the same bound holds for the left tail
$\mathbb{P}(Y \le -t)$ when $m(-\theta)$ is finite.
\end{theorem}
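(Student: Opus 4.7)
My plan is to run the classical Chernoff/exponential Markov argument, controlling $m(s)$ through a first-order differential inequality produced by applying the defining relation (\ref{zerobiasdefn}) to $f(y) = e^{sy}$. This yields the exact identity
$$m'(s) \;=\; \mathbb{E}[Ye^{sY}] \;=\; \sigma^2 s\, m^*(s), \qquad m^*(s) := \mathbb{E}[e^{sY^*}],$$
so the entire argument reduces to controlling $m^*(s)/m(s)$ using the coupling hypothesis on $Y^*-Y$. Once such a comparison is in hand one integrates the resulting bound on $(\log m)'$ from $0$, substitutes into Chernoff, and optimizes in $s$; the left-tail statements will follow by applying each argument to $-Y$, whose zero bias coupling is $-Y^*$.

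For the Bernstein bound (\ref{tailsseparate.2}), from $Y^*\le Y+c$ one obtains $m^*(s)\le e^{sc}m(s)$, which combined with the elementary inequality $e^{sc}(1-sc)\le 1$ upgrades to $m^*(s)\le m(s)/(1-sc)$ on $[0,1/c)$. The ODE then reads $(\log m)'(s)\le \sigma^2 s/(1-sc)$ and integrates to $\log m(s)\le (\sigma^2/c^2)(-\log(1-sc)-sc)$; a termwise comparison of the power series $\sum_{k\ge 2}(sc)^k/k \le \sum_{k\ge 2}(sc)^k/2$ sharpens this to the sub-gamma form $\log m(s)\le \sigma^2 s^2/(2(1-sc))$. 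Substituting $s=t/(\sigma^2+ct)\in[0,1/c)$ into the Chernoff inequality causes the exponent to collapse exactly to $-t^2/(2(\sigma^2+ct))$.

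For the sharper bound (\ref{tailsseparate}) under the two-sided coupling, I would strengthen the comparison of $m^*$ to $m$ via Cauchy--Schwarz,
$$m^*(s) \;=\; \mathbb{E}\bigl[e^{sY}\cdot e^{s(Y^*-Y)}\bigr] \;\le\; e^{sc}\sqrt{m(2s)},$$
which explains why the hypothesis requires $m$ to exist on the larger interval $[0,2/c)$. Substituting this into the zero-bias ODE produces a more delicate relation involving both $m(s)$ and $m(2s)$, and matching the specific constant $10/3$ in the denominator will be the \emph{main technical obstacle}: the resulting inequality no longer integrates in closed form, and one appears to need a sub-gamma bound of the form $\log m(s) \le (5\sigma^2/6)\,s^2/(1-sc/2)$ (perhaps derived by a bootstrap/iterate of the Cauchy--Schwarz ODE), after which the standard Legendre optimization at $s=t/(5\sigma^2/3+ct/2)$ reproduces the stated exponent.

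For part (b) I would keep only the crude inequality $m^*(s)\le e^{sc}m(s)$, so that $(\log m)'(s)\le \sigma^2 s\,e^{sc}$. A single integration by parts yields
$$\log m(s) \;\le\; \frac{\sigma^2}{c^2}\bigl((sc-1)e^{sc}+1\bigr) \;\le\; \frac{\sigma^2 s\,e^{sc}}{c},$$
and the choice $s=\theta=(\log t-\log\log t)/c$ gives $sc=\log(t/\log t)$, so $e^{sc}=t/\log t$ and $\log m(\theta)\le \sigma^2 t/c^2$. Combining with $-\theta t=-t(\log t-\log\log t)/c$ inside Chernoff produces the first inequality in (\ref{tlogtbound}); the second inequality then follows from $\log\log t\le (\log t)/2$ for $t>e$, equivalently $t\ge (\log t)^2$, which holds on $(e,\infty)$ since the derivative of $t-(\log t)^2$ is positive there.
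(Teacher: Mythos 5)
Your treatment of (\ref{tailsseparate.2}) and of part (b) is correct and essentially the paper's argument: the identity $m'(s)=\sigma^2 s\,m^*(s)$, the comparison $m^*(s)\le e^{sc}m(s)\le m(s)/(1-sc)$, integration of $(\log m)'$, and Chernoff at the stated values of $\theta$ all match (your exact integration of $\sigma^2 s/(1-sc)$ followed by the power-series comparison is a harmless variant of the paper's cruder step of pulling $1/(1-sc)\le 1/(1-\theta c)$ out of the integral, and both land on $\log m(\theta)\le \sigma^2\theta^2/(2(1-\theta c))$).

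The genuine gap is in (\ref{tailsseparate}). Your Cauchy--Schwarz step $m^*(s)\le e^{sc}\sqrt{m(2s)}$ is a dead end for two reasons. First, since $m(s)^2\le m(2s)$ by Cauchy--Schwarz (or Jensen), you have $\sqrt{m(2s)}\ge m(s)$, so this bound is actually \emph{weaker} than the one-sided bound $m^*(s)\le e^{sc}m(s)$ you already used for (\ref{tailsseparate.2}); it cannot produce an improved constant. Second, it uses only $Y^*-Y\le c$ and never exploits the two-sided hypothesis $|Y^*-Y|\le c$, which is precisely what must drive the improvement. The missing idea is the symmetric convexity estimate $e^y-e^x\le \tfrac{1}{2}|y-x|(e^y+e^x)$, applied with $y=sY^*$, $x=sY$: taking expectations gives $m^*(s)-m(s)\le \tfrac{cs}{2}(m^*(s)+m(s))$ and hence $m^*(s)\le \frac{1+cs/2}{1-cs/2}\,m(s)$ for $s\in(0,2/c)$. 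Feeding this into $m'(s)=\sigma^2 s\,m^*(s)$ and bounding
\begin{equation*}
\int_0^\theta \sigma^2 s\,\frac{1+cs/2}{1-cs/2}\,ds \le \frac{\sigma^2}{1-c\theta/2}\left(\frac{\theta^2}{2}+\frac{c\theta^3}{6}\right)\le \frac{5\sigma^2\theta^2}{6(1-c\theta/2)}
\end{equation*}
(using $c\theta<2$) yields exactly the sub-gamma bound with $\alpha=5/6$ that you conjectured but could not derive, after which your Legendre optimization at $\theta=2t/(4\alpha\sigma^2+ct)$ does give $-t^2/(10\sigma^2/3+ct)$. As written, your proposal does not prove (\ref{tailsseparate}).
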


As regards part (a) and behavior in $n$, we remark that if $|Y^*-Y|
\le c$ and $m(s)$ exists in $[0,2/c)$, the bound
(\ref{tailsseparate.2}) is preferred over (\ref{tailsseparate}) for
$|t| < 4 \sigma^2/ 3c$, a set increasing to $\mathbb{R}$
asymptotically in typical applications where the variance of $Y$
increases to infinity in $n$ while $c$ remains constant. Regarding
behavior in $t$, part (b) of  Theorem \ref{maintheorem} shows that
the respective asymptotic orders as $t \rightarrow \infty$ of $\exp(-t/(2c))$ and $\exp(-t/c)$ of
bounds (\ref{tailsseparate.2}) and (\ref{tailsseparate}), can be improved to $\exp(-t \log t/(2c))$. As
the right tail bound (\ref{tlogtbound}) applies only when $t>e$ it should be
considered as a complementary result to the bounds in (a) that hold
for all $t \ge 0$.

\begin{remark} Theorem 5.1 of \cite{goldsteinbook} states that when $Y$ is a mean zero random variable with variance one for which there exists a coupling to $Y^*$ such that $|Y^*-Y| \le c$ for some $c$ then the Kolmogorov distance between $Y$ and the standard normal distribution is bounded by $2.03c$. Hence for small $c$ the distribution of $Y$ is close to the normal, and in the limiting case where $c$ takes the value
zero inequality (\ref{tailsseparate.2}) is a valid bound when $Y$
has the standard normal distribution. For such $Y$ the inequality of
\cite{chu55} yields \beas \sup_{t \geq 0} \exp (t^2/2) \mathbb{P}(Y
\geq t) = a, \enas for $a=1/2$, while (\ref{tailsseparate.2}) yields
the same bound with $a=1$. The constant $a=1$ also results when
bounding $\mathbb{P}(Y \ge t)$ by $\inf_{s \ge 0}
e^{-st}\mathbb{E}[e^{sY}]$.

On the other hand, again by Theorem 5.1 of \cite{goldsteinbook},
when the distribution of $Y$ is not close to normal there cannot
exist a coupling of $Y$ to $Y^*$ with a small value of $c$, and the
bounds may perform poorly in that the tail decay of $Y$ may in fact
be faster than what is indicated by (\ref{tailsseparate.2}).

\end{remark}

We state some properties of the zero bias distribution. First, from
(\ref{zerobiasdefn}), it is easy to see that whenever $a \neq 0$, we
have
\begin{equation}\label{scaling}
    (aY)^* =_d a Y^*.
\end{equation}
Next, from \cite{goldsteinreinertzerobias}, if $Y$ is bounded by
some constant, then $Y^*$ is also bounded by the same constant, that
is,
\begin{equation}\label{boundedness}
    |Y|\leq c \quad \text{implies} \quad |Y^*| \leq c.
\end{equation}

Though Theorem \ref{maintheorem} may be invoked in the presence of
dependence, and for variables not expressed as sums, we
compare the performance of our bound to comparable results in the
literature whose application is limited to the case where $Y$ is the
sum of independent variables $X_1,\ldots,X_n$ with mean zero and
variances $\sigma_i^2 = \mbox{Var}(X_i) \in (0,\infty),
i=1,\ldots,n$. Letting $\sigma^2=\mbox{Var}(Y)$, following
\cite{goldsteinreinertzerobias}, one can form a zero biased coupling
of $Y$ to $Y^*$ by replacing the $I^{th}$ summand $X_I$ of $Y$ by a
random variable $X_I^*$, independent of the remaining summands, which
has the $I^{th}$ summand's zero bias distribution, where the index
$I$ has distribution $\mathbb{P}(I=i)=\sigma_i^2/\sigma$ and is
chosen independently of all else. When $|X_i|\leq c$ for all
$i=1,\ldots,n$,  by (\ref{boundedness}) this construction satisfies
$$|Y^*-Y|=|X_I^*-X_I| \le 2c,$$
and as the moment generating function of $Y$ exists everywhere in this case, using
the bound, say, (\ref{tailsseparate.2}) we obtain \bea
\label{ind.bernstein}
\mathbb{P}( Y \geq t)\leq \exp \left(-
\frac{t^2}{2 \sigma^2 + act} \right). \ena
with $a=4$. Perhaps the closest classical inequality to
 \eqref{ind.bernstein} that holds under the conditions above is the
one of Bernstein, see Corollary 2.11 of \cite{blm}, which yields
\eqref{ind.bernstein} with $a=2/3$. Though the constant of $2/3$ is
superior to $4$, our results are more general as they provide
concentration inequalities in the presence of dependence and for variables that need not be sums. Further, we also note
that the rate for large $t$ of the bound \eqref{tlogtbound} is
superior to the rate in \eqref{ind.bernstein} for any $a>0$.

The tail bounds in (\ref{tlogtbound}) can also be considered as a
version of Bennett's inequality for sums of independent random
variables. In the same setting as for \eqref{ind.bernstein} where
$Y$ is a sum of independent variables satisfying $|X_i| \leq c$,
Bennett's inequality, see Theorem 2.9 of \cite{blm}, provides the
tail bound \bea\label{bennett.independent}\mathbb{P}(Y \geq t) \leq
e^{t/c} \exp \left(- \frac{\sigma^2}{c^2}\left(1 +
\frac{ct}{\sigma^2} \right) \log \left(1 + \frac{ct}{\sigma^2}
\right) \right), \quad t \geq 0. \ena We note that in the case of
 independent summands, Bennett's inequality will in
general give better bounds than (\ref{tlogtbound}), but is again restricted to a sum of independent variables.


As the mean and variance pair $(\mu,\sigma^2)$ of a random variable
$Y$ may in general take on any value in $\mathbb{R} \times
(0,\infty)$, bounds for $Y$ expressed in terms of $\mu$, such as the
method of self bounding functions, see
\cite{mcdiarmid2006concentration}, and the use of size bias
couplings, see Ghosh and Goldstein (2011ab),
are not in general comparable to those of Theorem \ref{maintheorem}.
In particular, in Remark \ref{rem:chat}, while handling an example
involving dependent variables, we show how the bounds of Theorem
\ref{maintheorem}, expressed in terms of the variance, may be
superior to bounds expressed in terms of the mean.

\section{Hoeffding's permutation statistic}\label{hoeffding}
As discussed in the introduction, with $\pi$ a random permutation in
the symmetric group $S_n$ and $A=(a_{ij})_{1\leq i,j \leq n}$ an $n
\times n$ matrix with real entries, Hoeffding's statistic takes the
form
\begin{equation}\label{hoeffding-statistic}
Y = \sum_{i=1}^n a_{i\pi(i)}.
\end{equation}
Hoeffding's combinatorial central limit theorem \cite{hoe51} gives
conditions under which $Y$, properly centered and scaled, has an
asymptotic normal distribution. The rate of convergence of $Y$ to
its normal limit is well studied, see for instance
\cite{goldsteinbook} and references therein. Here we apply our main
results from Section \ref{mainresultssection} to obtain
concentration inequalities for $Y$ using zero bias couplings when $\pi$ is uniformly distributed over the symmetric group, and when its distribution is constant on conjugacy classes.

In the case where $\pi$ is uniform, when the rows of $A$ are monotone, or more generally, when they have the same relative order, the summand variables $\{a_{i \pi(i)}\}_{1 \le i \le n}$ are negatively associated and the Bernstein and Bennett inequalities hold, (\ref{ind.bernstein}) with $a=2/3$ and (\ref{bennett.independent}),
respectively, thus improving on the bound of Theorem \ref{hoeffdingtheorem} in this special case.
However, for both the uniform and constant conjugacy class distributions considered below it is easy to show that negative association does not hold in general.

\subsection{Uniform Distribution on Permutations}
Let $\pi$ be chosen uniformly over $S_n$. Letting
$$ a_{i\centerdot} = \frac{1}{n} \sum_{j=1}^n a_{ij}, \quad a_{\centerdot j} = \frac{1}{n} \sum_{i=1}^n a_{ij} \quad \text{and} \quad a_{\centerdot\centerdot} = \frac{1}{n^2} \sum_{i,j=1}^n a_{ij},$$
straightforward calculations show that the mean  of $Y$ is given by
$\mu_A = n a_{\centerdot\centerdot}$, and its variance by
\begin{equation}\label{varianceofhoeffding}
\sigma_A^2 = \frac{1}{n-1} \sum_{1 \le i,j \le n}
(a_{ij}^2-a_{i\centerdot}^2-a_{\centerdot j}^2 +
a_{\centerdot\centerdot}^2)=\frac{1}{n-1} \sum_{1 \le i,j \le
n}(a_{ij}-a_{i\centerdot}-a_{\centerdot
j}+a_{\centerdot\centerdot})^2.
\end{equation}
Further, let $||a||=\max_{1 \le i,j \le n}|a_{ij} - a_{i
\centerdot}|$. Avoiding trivialities, we assume $\sigma_A^2$ is non
zero.

\begin{theorem}\label{hoeffdingtheorem}
For $n \ge 3$ the bounds of Theorem \ref{maintheorem}  hold with $Y$
replaced by $Y-\mu_A$, $\sigma^2$ by $\sigma_A^2$ and $c = 8 ||a||$.
\end{theorem}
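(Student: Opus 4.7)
\emph{Plan.} I would apply Theorem~\ref{maintheorem} to $W := Y - \mu_A$, after rewriting $W$ through a doubly centered version of the array $A$ and producing a zero bias coupling $W^*$ of $W$ satisfying $|W^* - W| \le 8\|a\|$.

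\emph{Centering.} Set $b_{ij} = a_{ij} - a_{i\centerdot} - a_{\centerdot j} + a_{\centerdot\centerdot}$. Using $\sum_i a_{i\centerdot} = \sum_j a_{\centerdot j} = n a_{\centerdot\centerdot}$ together with the bijectivity of $\pi$, a direct calculation gives $W = \sum_{i=1}^n b_{i\pi(i)}$, and by \eqref{varianceofhoeffding} the variance of this sum is $\sigma_A^2$. The array $b$ is doubly centered, $b_{i\centerdot} = b_{\centerdot j} = 0$, and the identity $a_{\centerdot j} - a_{\centerdot\centerdot} = (1/n)\sum_i (a_{ij} - a_{i\centerdot})$ combined with the definition of $\|a\|$ gives
\[
|b_{ij}| \le |a_{ij} - a_{i\centerdot}| + |a_{\centerdot j} - a_{\centerdot\centerdot}| \le 2\|a\|.
\]

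\emph{Coupling.} For the zero bias coupling I would start from the Stein exchangeable pair $(W, W')$ obtained by setting $W' = \sum_i b_{i\pi'(i)}$ with $\pi' = \pi \circ (I,J)$ for a uniformly chosen transposition ($I \ne J$). A short calculation using $b_{i\centerdot} = 0$ shows $\mathbb{E}[W' \mid \pi] = (1 - 2/(n-1))W$, so that $(W,W')$ is a $\lambda$-Stein pair with $\lambda = 2/(n-1)$ (which accounts for the $n \ge 3$ assumption), and for every outcome
\[
|W' - W| = |b_{I\pi(J)} + b_{J\pi(I)} - b_{I\pi(I)} - b_{J\pi(J)}| \le 4 \cdot 2\|a\| = 8\|a\|.
\]
From this Stein pair I would build $W^*$ having the $W$-zero bias distribution, coupled to $W$ so that $|W^* - W| \le |W' - W| \le 8\|a\|$. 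Since $|W| \le 2n\|a\|$ the moment generating function of $W$ is finite for all $s \in \mathbb{R}$, and Theorem~\ref{maintheorem} then applies with $\sigma^2 = \sigma_A^2$ and $c = 8\|a\|$, yielding the stated tail bounds.

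\emph{Main obstacle.} The subtle point is the coupling step. The generic recipe for turning a $\lambda$-Stein pair into a zero biased variable---bias the joint law of $(W,W')$ by $(W'-W)^2$ and linearly interpolate using an independent $\mathrm{Unif}[0,1]$---produces $W^*$ with the correct marginal but paired with a size-biased copy of $W$ rather than the original uniform-$\pi$ version. Converting this distributional construction into a genuine coupling of the original $W$ to $W^*$, while preserving the outcome-wise bound $|W^*-W|\le 8\|a\|$, requires a Hoeffding-specific construction exploiting the double centering $b_{i\centerdot}=b_{\centerdot j}=0$ and the symmetry of the uniform distribution on $S_n$; this is where the real work of the proof lies, the remainder being a routine verification followed by invocation of Theorem~\ref{maintheorem}.
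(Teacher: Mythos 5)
Your proposal follows essentially the same route as the paper: the paper's proof consists of invoking the known zero bias coupling for the Hoeffding statistic under uniform $\pi$ (Theorem 2.1 of Goldstein (2005), equivalently Theorem 6.1 of Chen, Goldstein and Shao (2011)), which is built from precisely the random-transposition exchangeable pair on the doubly centered array that you describe and yields $|(Y-\mu_A)^*-(Y-\mu_A)|\le 8\|a\|$, after which Theorem \ref{maintheorem} is applied. The ``main obstacle'' you correctly identify---turning the square-bias-and-interpolate recipe into a genuine coupling with the original uniform-$\pi$ variable while keeping the almost-sure bound---is exactly the content of those cited constructions, so your sketch matches the paper's argument with that step outsourced rather than omitted.
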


\begin{proof}
When $\pi$ has the uniform distribution over $S_n$, use of the
exchangeable pair approach of \cite{goldsteinreinertzerobias} for
constructing zero bias couplings, as applied in Theorem 2.1 of
\cite{goldsteinpattern}, see also Theorem 6.1 of
\cite{goldsteinbook}, yields a coupling of $(Y-\mu_A)^*$ to
$Y-\mu_A$ that satisfies $$|(Y-\mu_A)^* - (Y-\mu_A)| \leq 8 ||a||.$$
An application of Theorem \ref{maintheorem} now yields the claim. \bbox
\end{proof}



\begin{remark} \label{rem:chat} \cite{chatterjee} obtained the concentration bound
\begin{equation}\label{chatterjeebound}
\mathbb{P}(|Y-\mu_A| \geq t) \leq 2 \exp\left(- \frac{t^2}{4 \mu_A +
2t} \right)
\end{equation}
for the Hoeffding statistic $Y$ under the additional condition that
 $0 \le a_{i,j}
\le 1$ for all $i,j$. In this case we may take $||a||=1$ and $c=8$
in Theorem \ref{hoeffdingtheorem}, yielding
\begin{equation}\label{zbbound}
\mathbb{P}(|Y-\mu_A| \geq t) \leq 2 \exp \left(- \frac{t^2}{2
\sigma_A^2+16t}\right) .
\end{equation}
A simple computation shows that the bound (\ref{zbbound}) is smaller
than (\ref{chatterjeebound}) when $t \le (2\mu_A-\sigma_A^2)/7$.

When $a_{ij}, 1 \le i,j \le n$ are themselves independent random
variables with law ${\cal L}(U)$ having support in $[0,1]$, then
\beas \mathbb{E}[\sigma_A^2]= (n-1)\mbox{Var}(U) \le
(n-1)\mathbb{E}[U^2] < n \mathbb{E}[U] = \mathbb{E}[\mu_A], \enas
where the first equality follows by a calculation using the first
expression for the variance in (\ref{varianceofhoeffding}), then
applying $0 \le U \le 1$ to yield $\mathbb{E}[U^2] \le
\mathbb{E}[U]$ for use in the strict inequality. Hence if the array
entries behave as independent and identically distributed random
variables on $[0,1]$, the bound (\ref{zbbound}) will be
asymptotically preferred to (\ref{chatterjeebound}) everywhere.
Finally we note that the bound (\ref{tlogtbound}) further improves
on \eqref{zbbound}, as regards its asymptotic order in $t$.
\end{remark}

\subsection{Permutation distribution constant on cycle type}
We now consider Hoeffding's statistic \eqref{hoeffding-statistic}
when the distribution of $\pi$ is constant over cycle type. This
framework includes two special cases of note, one where $\pi$ is a
uniformly chosen fixed point free involution, considered by
\cite{goldsteinrinott} and \cite{ghosh}, having applications to
permutation testing in certain matched pair experiments, and the
other where $\pi$ has the uniform distribution over permutations
with a single cycle, considered by \cite{kolchin}, under the
additional restriction that $a_{ij}$ factors into a product $b_i d_j$. Bounds on the error
of the normal approximation to $Y$ when the distribution of $\pi$ is
constant over cycle type were derived in \cite{goldsteinpattern}.

We start by recalling some relevant definitions. For  $q=1,\ldots,n$
letting $f_q(\pi)$ be the number of $q$ cycles of $\pi$, the vector
\beas f(\pi) = (f_1(\pi),\ldots,f_n(\pi)) \enas is the cycle type of
$\pi$. For instance, the permutation $\pi=((1,3,7,5),(2,6,4))$ in
$S_7$ consists of one 4 cycle in which $1 \rightarrow 3 \rightarrow
7 \rightarrow 5 \rightarrow 1$, and one 3 cycle where $2 \rightarrow
6 \rightarrow 4 \rightarrow 2$, and hence has cycle type
$(0,0,1,1,0,0,0)$. We say the permutations $\pi$ and $\sigma $ are
of the same cycle type if $f(\pi) = f(\sigma)$, and that a
distribution $\mathbb{P}$  on $S_n$ is constant on cycle type if
$\mathbb{P}(\pi)$ depends only on $f(\pi), $ that is
$$\mathbb{P}(\pi) = \mathbb{P}(\sigma) \quad \text{whenever} \quad
f(\pi)=f(\sigma).$$

With $\mathbb{N}_0$ the set of non-negative integers, clearly a vector $f=(f_1,\ldots,f_n)$ is a
cycle type of a permutation in $S_n$ if and only if $f \in {\cal F}_n$ where
\beas
{\cal F}_n=\{(f_1,\ldots,f_n) \in \mathbb{N}_0^n: \sum_{i=1}^n if_i=n\}.
\enas
A special case of a distribution constant on
cycle type is one uniformly distributed over all permutations having
cycle type $f \in {\cal F}_n$, denoted ${\cal U}(f)$.  The situations where $\pi$
is uniformly chosen from the set of all fixed point free
involutions, and chosen uniformly from all permutations having a
single cycle, are both distributions of type $\mathcal{U}(f)$, the
first with $f=(0,n/2,0,\ldots,0)$ for even $n$ and the second with
$f = (0,0,\ldots,0,1).$

We consider distributions over $S_n$ having no fixed points with probability one, as is true for
the two special cases of most interest. Noting that under this
condition no expression of the form $a_{ii}$ appears in the sum
(\ref{hoeffding-statistic}), let \beas a_{io} = \frac{1}{n-2}
\sum_{j: j \not =i }^n a_{ij}
\qmq{and} a_{oo} = \frac{1}{(n-1)(n-2)} \sum_{i \not = j}a_{ij}.
\enas Under the symmetry condition $a_{ij}=a_{ji}$ for all $i \not
=j$, see \cite{goldsteinbook}, when $\pi_f$ has distribution
$\mathcal{U}(f)$ with $f_1=0$, the mean $\mu$ and variance
$\sigma_f^2=\mbox{Var}(Y_f)$ of the corresponding variable $Y_f$ for
$n \geq 4$ are given by \bea \label{variancecycle} \mu=(n-2) a_{oo}
\qmq{and} \sigma_f^2 = \left(\frac{1}{n-1} + \frac{2 f_2}{n(n-3)}
\right) \sum_{i \neq j} (a_{ij} -2a_{io} + a_{oo})^2. \ena For $n
\ge 4$, when $n$ is even and $\iota$ is the cycle type of a fixed
point free involution, then $\iota_k=(n/2){\bf 1}(k=2)$, and when
$f$ is the cycle type of a permutation without any fixed points or
two cycles, such as is the case for one long cycle, then $f_2=0$,
and the variance in (\ref{variancecycle}) specializes, respectively,
to \bea \label{variance.fpfi} \sigma_\iota^2
=\frac{2(n-2)}{(n-1)(n-3)} \sum_{i \neq j} (a_{ij} -
2a_{io}+a_{oo})^2 \qmq{and} \sigma_f^2 = \frac{1}{n-1} \sum_{i \neq
j} (a_{ij} -2a_{io} + a_{oo})^2. \ena Let also \bea
\label{def.c.conj} a_o = \max_{i \neq j} |a_{ij}-2a_{io} + a_{oo}|.
\ena

Lemma 6.5 of \cite{goldsteinbook} shows that when a distribution
$\mathbb{P}$ on $S_n$ is constant on cycle type then it can be
represented as the mixture of the uniform distributions ${\cal
U}(f)$ for $f \in {\cal F}_n$,
\beas
\mathbb{P}=\sum_{f \in {\cal F}_n} \rho_f {\cal U}(f) \qmq{where}
\rho_f = \mathbb{P}(f(\pi)=f). \enas In particular, when the
distribution of $\pi$ is constant on cycle type and $Y$ is given by
\eqref{hoeffding-statistic} then
\bea\label{P.is.mix} \mathbb{E}[Y]=\mu, \quad
\mbox{and} \quad \mathbb{P}(Y-\mu \ge t) = \sum_{f \in {\cal F}_n} \rho_f \mathbb{P}(Y_f-\mu \ge t).
\ena
Hence, bounds for $Y$ are implied by those for $Y_f, f \in {\cal F}_n$.

\begin{theorem} 
Let $n \ge 5$ and $(a_{ij})_{i,j=1}^n$ be an array of real numbers
satisfying $a_{ij} = a_{ji}$. When $\pi$ is a uniformly chosen fixed point free involution, the bounds of Theorem \ref{maintheorem}  hold with $Y$ replaced by $Y-\mu$, $\sigma^2$ replaced by
$\sigma_\iota^2$ of (\ref{variance.fpfi}), and $c$ replaced by $24a_o$ with $a_o$ of
(\ref{def.c.conj}). When the distribution of $\pi$ is constant on cycle type and has no fixed points or two cycles with probability one, the bounds of Theorem \ref{maintheorem} hold with $Y$ replaced by $Y_f-\mu$, $\sigma^2$ replaced by $\sigma_f^2$ of (\ref{variance.fpfi}), and $c$ replaced by $40a_o$.
\end{theorem}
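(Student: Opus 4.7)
The plan is to follow the strategy used in the proof of Theorem~\ref{hoeffdingtheorem}: in each case, construct a zero bias coupling of $(Y-\mu)^*$ to $Y-\mu$ (respectively $(Y_f - \mu)^*$ to $Y_f - \mu$) on a common probability space whose absolute difference is bounded by the stated constant times $a_o$, and then invoke Theorem~\ref{maintheorem}. Because the Hoeffding statistic is a bounded random variable, its moment generating function exists everywhere, so the integrability hypotheses of Theorem~\ref{maintheorem} are automatically satisfied.

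For the fixed point free involution case, I would invoke the exchangeable pair construction of the zero bias distribution from \cite{goldsteinreinertzerobias}, as specialized to distributions constant on cycle type in Theorem~2.1 of \cite{goldsteinpattern} (see also Theorem~6.1 of \cite{goldsteinbook}). The construction randomly perturbs $\pi$ by a transposition to produce an exchangeable partner $\pi'$, and the zero bias variable is then built via the square bias transform of $Y_\iota' - Y_\iota$. Applying a transposition to an involution changes only a bounded number of cycle positions, so the difference $Y_\iota' - Y_\iota$ is a sum of boundedly many centered entries of the form $a_{ij} - 2a_{io} + a_{oo}$; the symmetry $a_{ij} = a_{ji}$ is essential for these centerings to be valid. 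Careful enumeration of the affected cycle positions yields the bound $|(Y_\iota - \mu)^* - (Y_\iota - \mu)| \leq 24 a_o$, after which Theorem~\ref{maintheorem} applies directly.

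For the second part, by the mixture representation (\ref{P.is.mix}) it suffices to establish the stated tail bound for each $Y_f$ with $f \in \mathcal{F}_n$ satisfying $f_1 = f_2 = 0$ in the support of the distribution; the bound then passes to the mixture term by term. The same exchangeable pair machinery of \cite{goldsteinpattern} applies, but the perturbing transposition now typically either splits a long cycle into two or merges two cycles into one, so the number of summands of $Y_f$ affected by such a perturbation is larger than in the involution case. Tracking these additional contributions produces the coupling bound $40 a_o$ in place of $24 a_o$, again supplying the hypothesis of Theorem~\ref{maintheorem}.

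The main obstacle in carrying out this program from scratch would be the detailed combinatorial bookkeeping for each cycle structure, including verifying that the exchangeable pair construction indeed realizes the exact zero bias distribution rather than some closely related distribution, and that the right choice of exchange probabilities is used so that the telescoping sum controlling $(Y_f - \mu)^* - (Y_f-\mu)$ collapses to a controlled number of terms. However, these verifications are already established in \cite{goldsteinpattern} and \cite{goldsteinbook}, so the proof reduces to quoting those constructions, bounding the resulting sums by $a_o$ times the appropriate combinatorial count, and invoking Theorem~\ref{maintheorem}.
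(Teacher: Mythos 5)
Your proposal is correct and follows essentially the same route as the paper: quote the bounded zero bias couplings from \cite{goldsteinpattern} and \cite{goldsteinbook} (with constants $24a_o$ and $40a_o$), note that boundedness of $Y$ gives the moment generating function everywhere, apply Theorem \ref{maintheorem}, and pass to general distributions constant on cycle type via the mixture representation \eqref{P.is.mix}. The only cosmetic difference is in which specific external results are cited: the paper points to Lemma 6.10 of \cite{goldsteinbook} for the involution coupling and Theorem 2.2 of \cite{goldsteinpattern} for the $f_1=f_2=0$ case, rather than Theorem 2.1 of \cite{goldsteinpattern}.
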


\begin{proof} When $\pi_\iota$ has the $\mathcal{U}(\iota)$ distribution, using the exchangeable
pair approach of \cite{goldsteinreinertzerobias}, the construction in Lemma 6.10 of \cite{goldsteinbook}
provides a zero biased
coupling for $Y_\iota-\mu$ that satisfies
\bea \label{24}
|(Y_\iota-\mu)^* - (Y_\iota-\mu)| \leq 24 a_o.
\ena
Theorem \ref{maintheorem} now obtains to yield the first claim. Similarly, Theorem 2.2 of \cite{goldsteinpattern} shows that the constant 24 in \eqref{24} can be replaced by 40 when $Y_\iota$ is replaced by $Y_f$ for any $f$ satisfying $f_1=f_2=0$. The final claim of the theorem is now obtained by applying the last equality of \eqref{P.is.mix}, noting that the variances and coupling constants for all such $Y_f$, and therefore the upper bounds on $\mathbb{P}(Y_f-\mu \ge t)$ produced by Theorem \ref{maintheorem}, are identical, and that $\rho_f$ sums to one. \bbox
\end{proof}

We remark that bounds for the general situation where the distribution of $\pi$ is constant on cycle type, without fixed points, can be obtained using \eqref{P.is.mix} in the same fashion, yielding a weighted sum of bounds of the two types appearing in Theorem \ref{maintheorem}.

\section{Proof of Main Result} \label{proofs}

\emph{Proof of Theorem \ref{maintheorem}:} Let
$m(s)=\mathbb{E}[e^{sY}]$ and $m^*(s)=\mathbb{E}[e^{sY^*}]$. When
$Y^*-Y \le c$ for all $s \ge 0$ then \bea \label{take.expz}
m^*(s)=\mathbb{E}[e^{s Y^*}] = \mathbb{E}[e^{s (Y^*-Y)} e^{s Y}] \le
\mathbb{E}[e^{cs} e^{s Y}] = e^{cs}m(s). \ena In particular when
$m(s)$ is finite then so is $m^*(s)$.

Part (a). If $m(s)$ exists in an open interval containing $s$ we may
interchange expectation and differentiation at $s$ to obtain \bea
\label{mprimeestim} m'(s)=\mathbb{E}[Y e^{s Y}]=\sigma^2
\mathbb{E}[s e^{s Y^*}] = \sigma^2 s m^*(s), \ena where we have
applied the zero bias relation (\ref{zerobiasdefn}) to yield the
second equality.

We first prove (\ref{tailsseparate.2}). Starting with the well known
inequality $1-x \le e^{-x}$, holding for all $x \ge 0$, we obtain
\beas e^x \le \frac{1}{1-x} \qmq{for $x \in [0,1)$.} \enas Hence,
for $\theta \in (0,1/c)$ and $0 \le s \le \theta$ we have \beas
m^*(s)=\mathbb{E}[e^{s Y^*}]=\mathbb{E}[e^{s (Y^*-Y)}e^{s Y}] \le
e^{s c}m(s) \le \frac{1}{1-s c}m(s). \enas Using the identity
(\ref{mprimeestim}) to express $m^*(s)$ in terms of $m'(s)$ we
obtain \beas m'(s) \le \frac{\sigma^2 s}{1-s c}m(s). \enas Dividing
both sides by $m(s)$, integrating over $[0,\theta]$ and using that
$m(0)=1$ we obtain \beas \log m(\theta) = \int_0^\theta
\frac{m'(s)}{m(s)}ds \le \frac{\sigma^2}{1-\theta c}\int_0^\theta
sds =\frac{\sigma^2 \theta^2}{2(1-\theta c)}, \enas and
exponentiation yields \beas m(\theta) \le \exp \left( \frac{\sigma^2
\theta^2}{2(1-\theta c)} \right). \enas

As (\ref{tailsseparate.2}) holds trivially for $t=0$ consider $t>0$
and apply Markov's inequality to obtain \beas \mathbb{P}(Y \ge t) =
\mathbb{P}(e^{\theta Y} \ge e^{\theta t} )\le e^{-\theta t}
m(\theta) \le \exp \left(-\theta t + \frac{\sigma^2
\theta^2}{2(1-\theta c)} \right). \enas Setting $\theta=t/(\sigma^2
+ ct)$, and noting this value lies in the interval $(0,1/c)$,
(\ref{tailsseparate.2}) follows.

If now $Y-Y^* \le c$ then letting $X=-Y$ we see from (\ref{scaling})
with $a=-1$ that $X^*=-Y^*$ has the $X$-zero bias distribution, and
applying (\ref{tailsseparate.2}) to $X$ yields the claimed left tail
inequality.

Turning to (\ref{tailsseparate}), by the convexity of the
exponential function we have
\beas 
\frac{e^y - e^x}{y-x} = \int_0^1 e^{t y +(1-t) x} dt \leq \int_0^1
(t e^y + (1-t) e^x) dt= \frac{e^y + e^x}{2} \qmq{for all $x \neq
y$,} \enas and hence \beas e^y - e^x \leq \frac{|y - x|(e^y +
e^x)}{2} \qmq{for all $x$ and $y$.} \enas

Hence, when $|Y^*-Y| \leq c$, for all $\theta \in (0,2/c)$ and $0
\le s \le \theta$,
\beas 
e^{s Y^*} - e^{s Y} \leq \frac{|s (Y^*-Y)| (e^{s Y^*} + e^{s Y})}{2}
\leq \frac{c s}{2} (e^{s Y^*} + e^{s Y}). \enas Taking expectation
yields
\beas 
m^*(s) - m(s) \leq \frac{c s}{2} (m^*(s) + m(s)), \qmq{hence}
    m^*(s) \leq \left(\frac{1+c s /2}{1-c s /2}
    \right) m(s),
\enas and now relation (\ref{mprimeestim}) gives that
\beas 
m'(s) \leq \sigma^2 s \left(\frac{1+c s /2}{1 -c s /2} \right) m(s).
\enas Following steps similar to the ones above, we obtain
\beas 
m(\theta) \leq \exp \left(\frac{\alpha \sigma^2 \theta^2}{1-c \theta
/2} \right) \qmq{for $\alpha=5/6$ and all $\theta \in (0,2/c)$.}
\enas

As the result holds trivially for $t=0$, fix $t>0$ and argue as
before using Markov's inequality to obtain
\begin{eqnarray*}
  \mathbb{P}(Y \geq t)
  \leq \exp \left(-\theta t + \frac{\alpha \sigma^2 \theta^2}{1-c \theta /2}
    \right) \qmq{for all $\theta \in (0,2/c)$.}
\end{eqnarray*}
Letting $\theta = 2t/(4\alpha \sigma^2 + ct)$, and noting that this
value lies in $(0,2/c)$, we obtain the asserted right tail
inequality (\ref{tailsseparate}). Replacing $Y$ by $-Y$ as before
now demonstrates the remaining claim.

Proof of (b). For any $s \in [0,\theta)$ such that $m(\theta)$
exists, by \eqref{take.expz} we have \beas
m'(s)=\mathbb{E}[Ye^{s Y}]=\sigma^2 s \mathbb{E}[e^{s Y^*}] =
\sigma^2 s m^*(s)\le \sigma^2 s e^{cs} m(s), \qmq{so that} ( \log
m(s) )' \le \sigma^2 s e^{cs}. \enas Integrating over $[0,\theta]$
and using that $m(0)=1$ we obtain \beas \log(m(\theta)) \le
\frac{\sigma^2}{c^2}\left(e^{c \theta}\left(c \theta -1 \right)+1
\right) \enas and exponentiation yields \beas m(\theta) \le
\exp\left(\frac{\sigma^2}{c^2}\left(e^{c \theta}\left(c \theta -1
\right)+1 \right)\right). \enas Applying Markov's inequality as
before, \beas
\mathbb{P}(Y \ge t)
\le \exp \left( -\theta t + \frac{\sigma^2}{c^2}\left(e^{c
\theta}\left(c \theta -1 \right)+1 \right) \right). \enas For $t>e$
letting $\theta=(\log t - \log \log t)/c$ we obtain the first claim of \eqref{tlogtbound} by
\begin{multline*}
\mathbb{P}(Y \ge t) \le \exp\left( -\frac{t}{c}\left( \log t - \log
\log t\right) +
\frac{\sigma^2}{c^2} \left( \frac{t}{\log t} \left(\log t - \log \log t -1 \right) + 1 \right) \right)\\
\le \exp\left( -\frac{t}{c}\left( \log t - \log \log t  -
\frac{\sigma^2}{c}\right)\right).
\end{multline*}
The second claim follows by the inequality $(\log t)/2 \ge \log \log
t$ for all $t>1$. The left tail bound follows as in (a). \bbox


\begin{thebibliography}{99}





\bibitem[Boucheron et~al.(2013)]{blm} Boucheron, S., Lugosi, G., Massart, P., 2013.
\newblock Concentration Inequalities: A Nonasymptotic Theory of
Independence.
\newblock Oxford University Press.

\bibitem[Chatterjee(2007)]{chatterjee} Chatterjee, S., 2007.
\newblock Stein's method for concentration inequalities.
\newblock Probab. Theory Related Fields, 138, 305-321.

\bibitem[Chen et~al.(2011)]{goldsteinbook} Chen, L.H.Y., Goldstein, L., Shao, Q. M., 2011.
\newblock Normal approximation by Stein's method.
\newblock Springer; Berlin, Heidelberg.

\bibitem[Chu (1955)]{chu55}  Chu, J. T., 1955.
\newblock On bounds for the normal integral.
\newblock Biometrika 42,  263-265.



\bibitem[Ghosh(2009)]{ghosh} Ghosh, S., 2009.
\newblock $L^p$ bounds for a combinatorial central limit theorem with
involutions.
\newblock Preprint.

\bibitem[Ghosh and Goldstein(2011{\natexlab{a}})]{GhGo11a}
S.~Ghosh and L.~Goldstein,   2011{\natexlab{a}}.
\newblock Concentration of measures via size biased couplings.
\newblock \emph{Probab. Th. Rel. Fields}, 149:\penalty0 271--278.

\bibitem[Ghosh and Goldstein(2011{\natexlab{b}})]{GhGo11b}
S.~Ghosh, L.~Goldstein,  2011{\natexlab{b}}.
\newblock Applications of size biased couplings for concentration of measures.
\newblock \emph{Electronic Communications in Probability}, 16:\penalty0 70--83.

\bibitem[Goldstein(2005)]{goldsteinpattern} Goldstein, L., 2005.
\newblock Berry Esseen Bounds for Combinatorial Central Limit Theorems and
Pattern Occurrences, using Zero and Size Biasing.
\newblock Jour. of Appl. Probab,  42, pp. 661--683.


\bibitem[Goldstein and Reinert(1997)]{goldsteinreinertzerobias} Goldstein, L., Reinert, G., 1997.
\newblock Stein's method and the zero bias transformation with application
to simple random sampling.
\newblock Ann. Appl. Probab., 7, pp. 935-952. arXiv:math.PR/0510619.


\bibitem[Goldstein and Rinott(2003)]{goldsteinrinott} Goldstein, L., Rinott, Y., 2003.
\newblock A permutation test for matching and its asymptotic distribution.
\newblock Metron, 61, pp. 375-388.


\bibitem[Hoeffding(1951)]{hoe51} Hoeffding, W., 1951.
\newblock A combinatorial central limit theorem.
\newblock Ann. Math. Stat.,  22, pp. 558-566.


\bibitem[Kolchin and Chistyakov(1973)]{kolchin} Kolchin, V. F., Chistyakov, V. P., 1973.
\newblock On a combinatorial limit theorem.
\newblock Theory of Probability and Its Applications, 18, 728-739.

\bibitem[Ledoux(2001)]{ledoux} Ledoux, M.,  2001.
\newblock The concentration of measure phenomenon.
\newblock Amer. Math. Soc., Providence, RI.


\bibitem[McDiarmid(2006)]{mcdiarmid2006concentration} McDiarmid, C.,  Reed, B., 2006.
\newblock  Concentration for self-bounding functions and an
inequality of Talagrand.
\newblock Random Structures \& Algorithms, 29, pp. 549-557.

\bibitem[Ross(2011)]{nathan} Ross, N. F., 2011.
\newblock Fundamentals of Stein's method.
\newblock Probability Surveys, 8, pp. 210-293.

\bibitem[Stein(1972)]{Stein} Stein, C., 1972.
\newblock A bound for the error in the normal approximation to the
distribution of a sum of dependent random variables.
\newblock Proc. Sixth Berkeley Symp. Math. Statist. Probab., 2,
583-602, Univ. California Press, Berkeley.

\bibitem[Stein(1986)]{Stein86} Stein, C., 1986.
\newblock Approximate Computation of Expectations.
\newblock Institute of Mathematical Statistics, Hayward, CA.

\bibitem[Talagrand(1995)]{tal} Talagrand, M., 1995.
\newblock Concentration of measure and isoperimetric inequalities in
product spaces.
\newblock Publ. Math Inst. Hautes \'Etud Sci., 81, 73-205.

\bibitem[Wald and Wolfowitz(1944)]{ww} Wald, A., Wolfowitz, J., 1944.
\newblock Statistical tests based on permutations of the
observations.
\newblock Ann. Math. Stat. 15, 358-372.

\end{thebibliography}
\end{document}